\newtheorem{theorem}{Theorem}
\newtheorem{corollary}{Corollary}
\begin{document}
	\renewcommand{\refname}{References}
	\renewcommand{\proofname}{Proof.}
	\renewcommand{\figurename}{Fig.}

	\thispagestyle{empty}
	
	\title[The volume of a spherical antiprism]{The volume of a spherical antiprism}
	\author{{N.V. ABROSIMOV, B. VUONG}}%
	\address{Nikolay Abrosimov \smallskip % Контактные данные всех авторов и место работы указываются только на английском языке
		\newline\hphantom{iii} Regional Scientific and Educational Mathematical Center, 
		\newline\hphantom{iii} Tomsk State University,
		\newline\hphantom{iii} pr. Lenina, 36,
		\newline\hphantom{iii} 634050, Tomsk, Russia\smallskip
		\newline\hphantom{iii} Sobolev Institute of Mathematics,
		\newline\hphantom{iii} pr. Koptyuga, 4,
		\newline\hphantom{iii} 630090, Novosibirsk, Russia\smallskip
		\newline\hphantom{iii} Novosibirsk State University,
		\newline\hphantom{iii} Pirogova str., 2,
		\newline\hphantom{iii} 630090, Novosibirsk, Russia\smallskip}%
	\email{abrosimov@math.nsc.ru}%

	\address{Bao Vuong \smallskip % Контактные данные всех авторов и место работы указываются только на английском языке
		\newline\hphantom{iii} Regional Scientific and Educational Mathematical Center,
		\newline\hphantom{iii} Tomsk State University,
		\newline\hphantom{iii} pr. Lenina, 36,
		\newline\hphantom{iii} 634050, Tomsk, Russia\smallskip
		\newline\hphantom{iii} Novosibirsk State University,
		\newline\hphantom{iii} Pirogova str., 1,
		\newline\hphantom{iii} 630090, Novosibirsk, Russia\smallskip}%
	\email{vuonghuubao@live.com}%
	
	%\thanks{\sc Abrosimov, N.V., Vuong, B.,
%		The volume of a spherical antiprism}
	\thanks{\copyright \ 2021 Abrosimov N.V., Vuong B}
	\thanks{\rm This work was supported by the Ministry of Science and Higher Education of Russia (agreement No. ~075-02-2020-1479/1)}
	%\thanks{\it Received  October, 17, 2021, published  ??, ??,  2021.}%

	\maketitle {\small
		\begin{quote}
			\noindent{\sc Abstract. } We consider a spherical antiprism. It is a convex polyhedron with $2n$ vertices in the spherical space $\mathbb{S}^3$. This polyhedron has a group of symmetries $S_{2n}$ generated by a mirror-rotational symmetry of order $2n$, i.e. rotation to the angle $\pi/n$ followed by a reflection. We establish necessary and sufficient conditions for the existence of such polyhedron in $\mathbb{S}^3$. Then we find relations between its dihedral angles and edge lengths in the form of cosine rules through a property of a spherical isosceles trapezoid. Finally, we obtain an explicit integral formula for the volume of a spherical antiprism in terms of the edge lengths.\medskip
			
			\noindent{\bf Keywords:} spherical antiprism, spherical volume, symmetry group $S_{2n}$, rotation followed by reflection, spherical isosceles trapezoid.
		\end{quote}
	}

	\section{Introduction}
	
	An {\em antiprism} $\mathcal{A}_n$ is a convex polyhedron with two equal regular $n$-gons as the top and the bottom and $2n$ equal triangles as the lateral faces. The antiprism can be regarded as a drum with triangular sides (see Fig.~1 where for $n=5$ the lateral boundary is shown).
	
	An antiprism $\mathcal{A}_n$ with $2n$ vertices has a symmetry group $S_{2n}$ generated by a mirror-rotational symmetry of order $2n$ denoted by $C_{2n\,h}$ (in Sh\"onflies notation). In Hermann--Mauguin notation this type of symmetry is denoted by $\overline{2n}$. The element $C_{2n\,h}$ is a composition of a rotation by the angle of $\pi/n$ about an axis passing through the centres of the top and the bottom faces and reflection with respect to a plane perpendicular to this axis and passing through the middles of the lateral edges (see Fig.~2).
	
	\begin{figure}[th]
		\centering
		\includegraphics[width=0.7\textwidth]{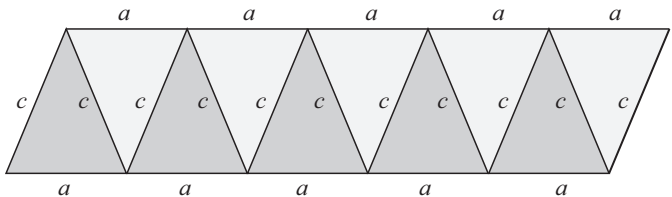}
		\caption{The lateral faces of antiprism $\mathcal{A}_5$\label{fig1}}
	\end{figure}
	
	\begin{figure}[th]
		\centering
		\includegraphics[width=0.7\textwidth]{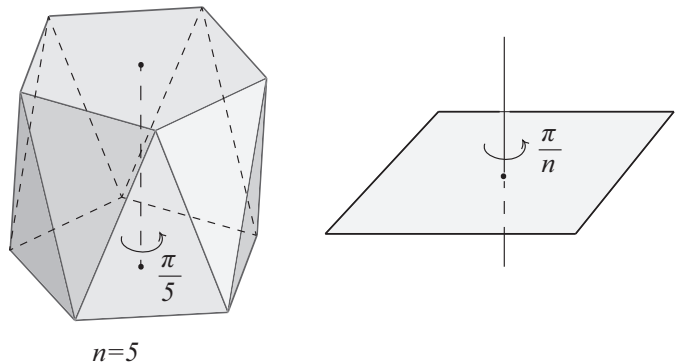}
		\caption{The symmetry of an antiprism $\mathcal{A}_5$\label{fig2}}
	\end{figure}
	
	The above definitions of an antiprism $\mathcal{A}_n$ and its symmetry group $S_{2n}$ take place for Euclidean and also for the hyperbolic and spherical space. By definition, $\mathcal{A}_n$ has two types of edges. Denote by $a$ the length of those edges that form top and bottom $n$-gonal faces. Set $c$ for the length of the lateral edges. Let $A, C$ denotes the dihedral angles respectively.
	
	The volume of a compact hyperbolic antiprism was given by the authors in \cite{hyper}. An ideal antiprism in $\mathbb{H}^3$ with all vertices at infinity was studied by A.Yu.~Vesnin and A.D.~Mednykh \cite{VesMed} (see also \cite{Ves}). A particular case of ideal rectangular antiprism is due to W.P.~Thurston \cite{Thu}. 
	
	In the present paper we consider a spherical antiprism. We also derive a formula for regular spherical tetrahedron and regular spherical octahedron as some particular cases of such family of antiprisms.
	
	\section{Regular spherical tetrahedron and regular spherical octahedron}
	
	For the sake of completeness, we bring volume formulae of the regular tetrahedron and regular octahedron with edge length $a$ in spherical space. However, these results might be known.
	
	A regular spherical tetrahedron is a special case of an antiprism. When $n=2$ the $n$-gons at the top and the bottom of an antiprism $\mathcal{A}_n$ degenerate to the corresponding two skew edges and we get a tetrahedron with symmetry group $S_4$. Hence, for $n=2$ and $a=c$ we have a regular spherical tetrahedron (see Fig.~3, left). 
	\begin{figure}[th]
		\centering
		\includegraphics[width=0.7\textwidth]{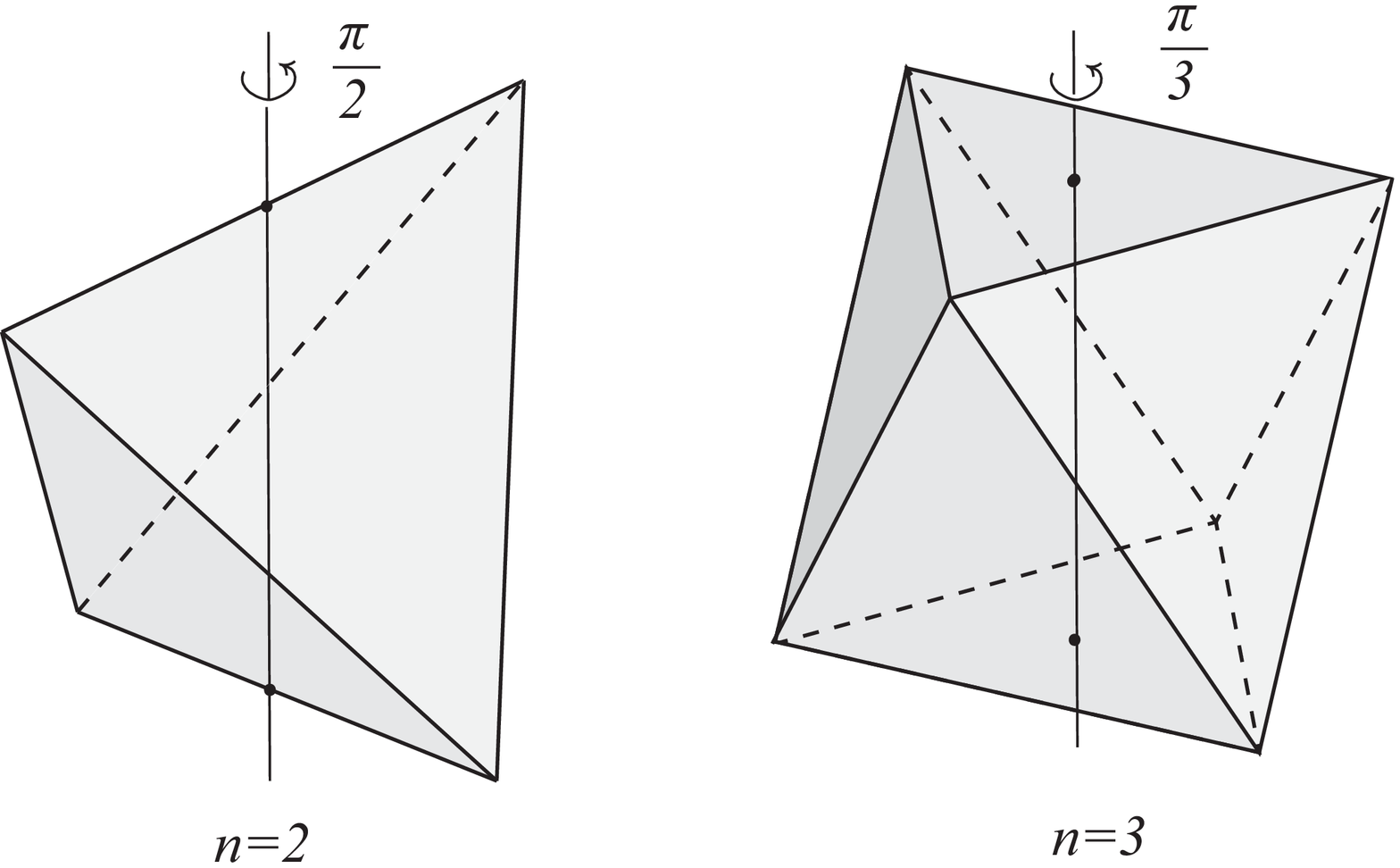}
		\caption{Particular antiprisms $\mathcal{A}_2$ and $\mathcal{A}_3$\label{fig3}}
	\end{figure}
	
	\begin{theorem}\label{regtet}
		Let $\mathcal{T}(a)$ be a regular spherical tetrahedron with edge lengths $a$ and dihedral angles $A$. Then the volume $V=vol(\mathcal{T})$ can be found by either of the following formulae
		\begin{align*}\label{Volumetetra}
		V&=3\displaystyle \int_{\arccos \frac{1}{3}}^A  \arccos\left(\frac{\cos\varphi}{1-2\cos\varphi}\right)\,d\varphi,\\
		V&=\int_{0}^a \frac{3\,t\sin t\,dt}{(1+2\cos t)\sqrt{(1+\cos t)(1+3\cos t)}}.
		\end{align*}
	\end{theorem}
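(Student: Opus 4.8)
The plan is to establish both formulae by relating the volume to the Schläfli differential formula, which for a polyhedron $P$ in $\mathbb{S}^3$ states that
\[
dV = \frac{1}{2}\sum_{e} \ell_e\, d\theta_e,
\]
where the sum runs over the edges $e$ with lengths $\ell_e$ and dihedral angles $\theta_e$. For a regular spherical tetrahedron all six edges have the common length $a$ and common dihedral angle $A$, so this collapses to the relation $dV = 3\,a\,dA$. The whole argument therefore reduces to (i) finding the precise dependence between $a$ and $A$ for a regular spherical tetrahedron, and (ii) integrating.

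First I would derive the edge–angle relation. For the regular spherical tetrahedron the standard spherical trigonometry of a regular simplex gives the cosine-type identity
\[
\cos A = \frac{\cos a}{1-\cos a}, \qquad\text{equivalently}\qquad \cos a = \frac{\cos A}{1+\cos A}.
\]
I would verify this from the Gram matrix of the four outward face normals (a $4\times4$ matrix with $1$ on the diagonal and $-\cos A$ off-diagonal, whose kernel/positivity encodes the vertices), or alternatively by placing the four vertices symmetrically on $\mathbb{S}^3$ and computing the spherical edge and dihedral angle directly. From this relation one reads off the degenerate and limiting values: as the tetrahedron shrinks to a Euclidean one, $A \to \arccos\frac13$ and $a \to 0$; this fixes the lower limit $\arccos\frac13$ in the first integral and $0$ in the second.

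Next I would assemble the two formulae. For the first, starting from $dV = 3\,a\,dA$, I substitute $a = \arccos\!\big(\tfrac{\cos A}{1+\cos A}\big)$ and integrate $A$ from the Euclidean value $\arccos\frac13$ (where $V=0$) up to the actual angle; writing the integration variable as $\varphi$ gives exactly
\[
V = 3\int_{\arccos\frac13}^{A}\arccos\!\left(\frac{\cos\varphi}{1-2\cos\varphi}\right)d\varphi,
\]
after rewriting $\tfrac{\cos\varphi}{1+\cos\varphi}$ in the equivalent form appearing in the statement (the two expressions agree under the angle substitution, and I would check this algebraic identity carefully). For the second formula I change the variable of integration from $A$ to $a$: from $dV = 3\,a\,dA$ I need $dA$ in terms of $da$, which I obtain by differentiating the relation $\cos A = \tfrac{\cos a}{1-\cos a}$. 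Differentiating gives $-\sin A\,dA = \tfrac{-\sin a}{(1-\cos a)^2}\,da$, and expressing $\sin A$ through $\cos A$ via the edge–angle relation converts the factor into $\tfrac{\sin t}{(1+2\cos t)\sqrt{(1+\cos t)(1+3\cos t)}}$, yielding
\[
V = \int_{0}^{a}\frac{3\,t\,\sin t\,dt}{(1+2\cos t)\sqrt{(1+\cos t)(1+3\cos t)}}.
\]

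The main obstacle I anticipate is the bookkeeping in the change of variables for the second formula: correctly simplifying $\sin A$ in terms of $t=a$ requires computing $1-\cos^2 A = 1-\big(\tfrac{\cos t}{1-\cos t}\big)^2$ and factoring the resulting expression as the product $(1+\cos t)(1+3\cos t)$ divided by $(1-\cos t)^2$, and then tracking all the signs and simplifications so that the algebra lands precisely on the stated integrand. Verifying that the two distinct closed-form arccosine expressions in the first formula are genuinely equal under the substitution is the other delicate point; both are routine in principle but easy to mis-sign, so I would carry them out with care rather than by inspection.
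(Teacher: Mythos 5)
Your overall strategy is exactly the paper's: use the Schl\"afli formula to get $dV=3\,a\,dA$, combine it with the edge--angle relation of the regular spherical tetrahedron, and integrate once in the variable $A$ and once in the variable $a$. However, the one nontrivial quantitative ingredient --- the edge--angle relation --- is stated incorrectly, and this is not a repairable slip in your write-up because everything downstream is claimed to follow from it. You assert $\cos A=\frac{\cos a}{1-\cos a}$, equivalently $\cos a=\frac{\cos A}{1+\cos A}$. This fails the very limit you then invoke: as $a\to 0$ your formula gives $\cos A\to\frac{1}{1-1}$, not $\frac13$; and at $A=\arccos\frac13$ it gives $\cos a=\frac14$, not $a=0$. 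The correct relation is $\cos A=\frac{\cos a}{1+2\cos a}$, equivalently $\cos a=\frac{\cos A}{1-2\cos A}$, which is what actually appears inside the first integral of the statement. Your claim that $\frac{\cos\varphi}{1+\cos\varphi}$ and $\frac{\cos\varphi}{1-2\cos\varphi}$ ``agree under the angle substitution'' is false (at $\cos\varphi=\frac13$ they equal $\frac14$ and $1$ respectively), and differentiating your relation yields $\frac{dA}{da}=\frac{\sin a}{(1-\cos a)\sqrt{1-2\cos a}}$, not the integrand $\frac{\sin t}{(1+2\cos t)\sqrt{(1+\cos t)(1+3\cos t)}}$ of the second formula. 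So as written the argument does not reach either stated identity.

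The paper obtains the correct relation by a short two-step link computation that you could adopt in place of the Gram-matrix verification: the spherical cosine rule on a face gives the face angle $\alpha$ via $\cos\alpha=\frac{\cos a}{1+\cos a}$, and the vertex link (a small sphere centered at a vertex cuts out a regular spherical triangle with sides $\alpha$ and angles $A$) gives $\cos A=\frac{\cos\alpha}{1+\cos\alpha}$; composing the map $u\mapsto\frac{u}{1+u}$ with itself yields $\cos A=\frac{\cos a}{1+2\cos a}$. With that relation in hand, your own plan goes through verbatim: $\sin^2A=\frac{(1+\cos a)(1+3\cos a)}{(1+2\cos a)^2}$, which produces exactly the stated second integrand, and the lower limits $\arccos\frac13$ and $0$ are justified by the degeneration $V\to 0$ as $a\to 0$. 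Your Gram-matrix route would also work if carried out correctly (the vertex Gram matrix is proportional to the inverse of $(1+\cos A)I-\cos A\,J$, whose off-to-diagonal ratio is $\frac{\cos A}{1-2\cos A}$), but as it stands the relation you wrote down is the one error that sinks the proof.
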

	
	\begin{proof}
		Consider a spherical regular tetrahedron $\mathcal{T}$. The length of the edges is $a$, the dihedral angle is $A$. Denote the face angles by $\alpha$ as shown in Fig. \ref{stet1}, left. 
		\begin{figure}[th]
			\centering
			\includegraphics[width=0.7\textwidth]{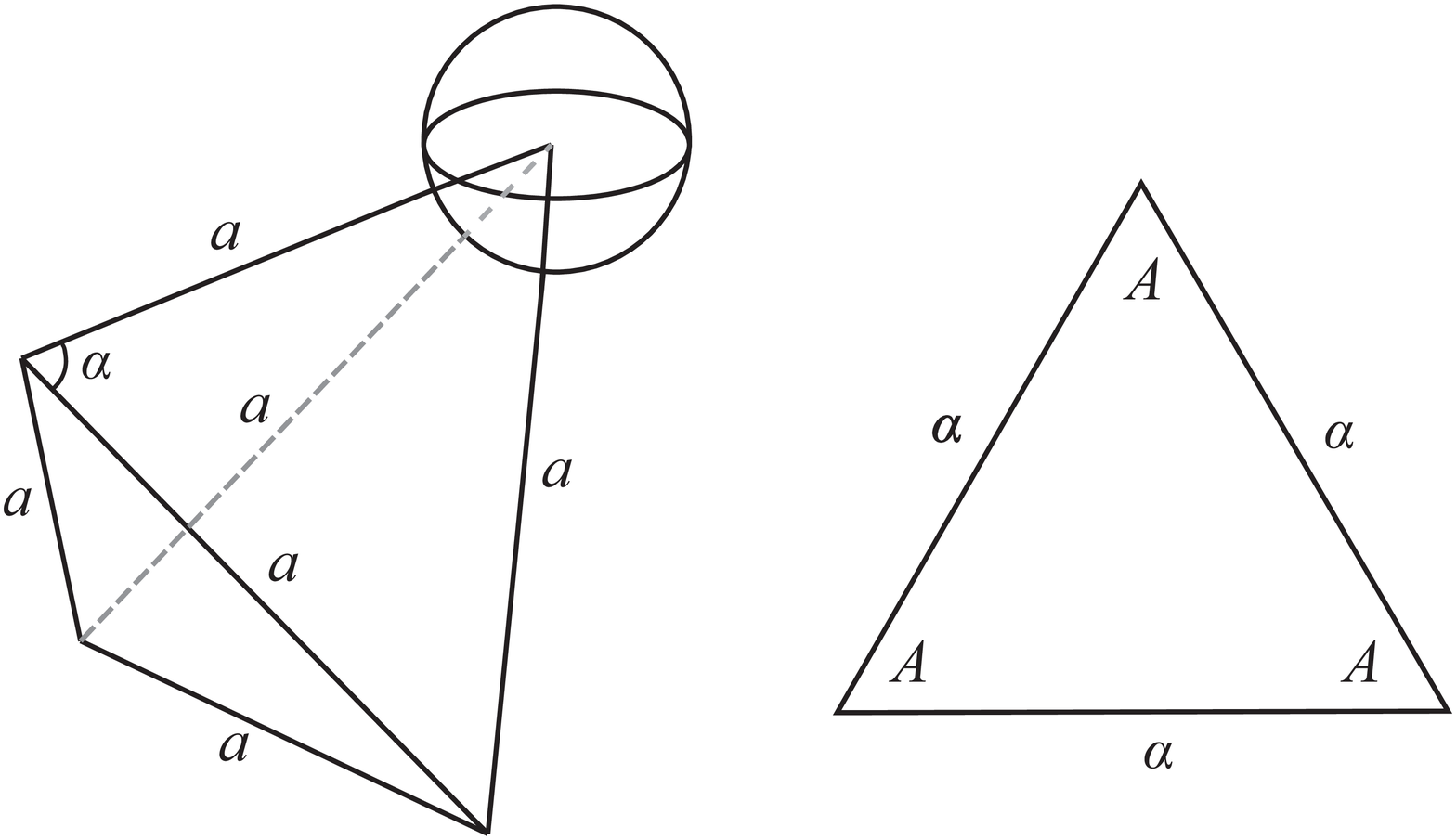}
			\caption{Regular spherical tetrahedron $\mathcal{T}$ and its intersection with a sphere centered at a vertex\label{stet1}}
		\end{figure}
		
		The faces of $\mathcal{T}$ are equal regular spherical triangles. Applying the spherical cosine rule to a face, we get the following identity
		
		\[
		\cos a = \cos ^2 a + \sin^2 a \cos \alpha
		\]
		or equivalently
		\begin{equation}\label{cosalpha}
		\cos \alpha = \frac{\cos a}{1+\cos a}.    
		\end{equation}
		
		Consider an intersection of $\mathcal{T}(a)$ with a sufficiently small sphere centered at a vertex of $\mathcal{T}$ (see Fig.~\ref{stet1}, right). Without loss of generality, we assume that the intersection is a regular spherical triangle with interior angles $A$ and sides $\alpha$. By the spherical cosine rule we have
		\begin{equation}\label{cosA}
		\cos A=\frac{\cos \alpha}{1+\cos \alpha}.
		\end{equation}
		
		Substituting (\ref{cosalpha}) into (\ref{cosA}), we get a relation between dihedral angles and edge lengths of $\mathcal{T}$
		\begin{equation}\label{1}
		\cos A=\frac{\cos a}{1+2\cos a}
		\end{equation}
		or equivalently
		\begin{equation}\label{2}
		a =\arccos \frac{\cos A}{1-2\cos A}.
		\end{equation}
		
		Observe, that when $a\to 0$ the spherical tetrahedron $\mathcal{T}$ degenerates to a single point and its volume $V\to 0$. At the same time, its dihedral angles $A\to\arccos\frac{1}{3}$.
		
		By the Schl\"afli formula (see, e.g., \cite{Vinberg}, Chap.~7, Sec.~2.2) we have the following expression for a differential of volume $V=vol(\mathcal{T})$
		\begin{equation}\label{Schl}
		d V=\sum_{\theta}\frac{\ell_{\theta}}{2}\,d\theta=3\,a\,dA,
		\end{equation}
		where the sum is taken over all the edges of $\mathcal{T}$ and $\ell_{\theta}$ is the length of an edge with dihedral angle $\theta$ along it.
		
		We substitute (\ref{2}) in (\ref{Schl}) and integrate it with respect to variable $A$ denoted by $\varphi$. Thus, by the Newton--Leibniz formula we express the volume of $\mathcal{T}$ in terms of its dihedral angles
		\begin{equation*}\label{3}
		V=3\int_{\arccos\frac{1}{3}}^{A}{\arccos}\left(\frac{\cos\varphi}{1-2\,\cos\varphi}\right)d\varphi\,.
		\end{equation*}
		
		Differentiating equation (\ref{Schl}) with respect to $a$, we get
		$$
		\frac{dV}{da}=3\,a\,\frac{dA}{da}.
		$$
		
		From relation (\ref{1}) we have
		$$
		\frac{d A}{d a}=\frac{\sin a}{(1+2\cos a)\sqrt{(\cos a+1)(3\cos a+1)}}\,.
		$$
		
		Combining the latter two relations, by the Newton--Leibniz formula we express the volume of $\mathcal{T}$ in terms of its edge lengths
		\begin{equation*}\label{4}
		V=\int_0^a \frac{3\,t\,\sin t\;dt}{(1+2\cos t)\sqrt{(\cos t+1)\,(3\cos t+1)}}\,.
		\end{equation*}
	\end{proof}
	
	This result is similar to the formulae for the volume of a regular hyperbolic tetrahedron (see, e.g., \cite{AV2017}, Theorem~1).
	
	For $n=3$ and $a=c$ an antiprism $\mathcal{A}_n$ become a regular octahedron (see Fig.~3, right).
	
	\begin{theorem}\label{regoct}
		Let $\mathcal{O}(a)$ be a regular spherical octahedron with edge lengths $a$ and dihedral angles $A$. Then the volume $V=vol(\mathcal{O})$ can be found by either of the following formulae
		\begin{align*}\label{Volumeocta}
		V&=6\displaystyle \int_{\arccos(-\frac{1}{3})}^A \arccos\left(-\frac{\cos\varphi+1}{2\cos\varphi}\right)\,d\varphi,\\
		V&=\int_{0}^a \frac{6\,t\sin t\,dt}{(1+2\cos t)\sqrt{\cos t (1+\cos t)}}.
		\end{align*}
	\end{theorem}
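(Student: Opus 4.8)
The plan is to reproduce, for the octahedron, the same three-stage argument that proved Theorem~\ref{regtet}: first derive a cosine rule linking the dihedral angle $A$ to the edge length $a$, then feed it into the Schl\"afli differential formula, and finally integrate by Newton--Leibniz in each of the two variables. The first stage begins exactly as in the tetrahedral case. Every face of $\mathcal{O}(a)$ is a regular spherical triangle of side $a$, so the spherical cosine rule applied to a face gives, just as in~(\ref{cosalpha}),
\[
\cos\alpha=\frac{\cos a}{1+\cos a},
\]
where $\alpha$ is the face angle.

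The point where the octahedron genuinely differs from the tetrahedron—and the step I expect to be the main obstacle—is the vertex figure. At each vertex of an octahedron \emph{four} faces meet, so the intersection of $\mathcal{O}(a)$ with a small sphere centred at a vertex is a regular spherical quadrilateral (a spherical square) of side $\alpha$ and interior angle $A$, in place of the vertex triangle that appeared for $\mathcal{T}(a)$; a single cosine rule no longer suffices. I would resolve this by joining the centre of the square to its four vertices and four edge-midpoints, cutting it into eight congruent right-angled spherical triangles, each having central angle $\pi/4$, base angle $A/2$, and a leg $\alpha/2$. The right-triangle (Napier) relation $\cos(\pi/4)=\cos(\alpha/2)\sin(A/2)$ then yields $\sin(A/2)=1/(\sqrt{2}\,\cos(\alpha/2))$, whence
\[
\cos A=-\tan^2\frac{\alpha}{2}=\frac{\cos\alpha-1}{\cos\alpha+1}.
\]
Eliminating $\alpha$ between this identity and the face relation produces the sought cosine rule
\[
\cos A=-\frac{1}{1+2\cos a},\qquad\text{equivalently}\qquad a=\arccos\!\left(-\frac{\cos A+1}{2\cos A}\right).
\]
As a consistency check fixing the limits of integration, letting $a\to0$ forces $\cos A\to-\tfrac13$, so the dihedral angle tends to $\arccos(-\tfrac13)$ while the octahedron collapses to a point and $V\to0$.

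The remaining stage is a transcription of the tetrahedral computation with the combinatorics of the octahedron inserted. Since $\mathcal{O}(a)$ has $12$ equal edges, each of length $a$ and dihedral angle $A$, the Schl\"afli formula reads
\[
dV=\frac12\sum_{\theta}\ell_{\theta}\,d\theta=6\,a\,dA.
\]
Integrating in $A$ and substituting $a=\arccos(-(\cos A+1)/(2\cos A))$ gives the first formula by Newton--Leibniz, with lower limit $\arccos(-\tfrac13)$. For the second formula, I would differentiate $\cos A=-1/(1+2\cos a)$ to obtain
\[
\frac{dA}{da}=\frac{\sin a}{(1+2\cos a)\sqrt{\cos a\,(1+\cos a)}},
\]
and then $dV/da=6\,a\,(dA/da)$ integrates directly to the stated edge-length formula. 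The only genuinely new work is the spherical-square relation; everything else is parallel to the tetrahedral proof.
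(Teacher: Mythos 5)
Your proposal is correct and follows the same three-stage argument as the paper: the face relation $\cos\alpha=\cos a/(1+\cos a)$, the vertex-figure relation giving $\cos A=-1/(1+2\cos a)$, and then the Schl\"afli formula $dV=6\,a\,dA$ integrated by Newton--Leibniz in each variable. The only (harmless) deviation is in the treatment of the spherical square at a vertex: you cut it into eight right triangles and apply Napier's rule $\cos\frac{\pi}{4}=\cos\frac{\alpha}{2}\,\sin\frac{A}{2}$, whereas the paper draws a diagonal and eliminates its length between the spherical sine and cosine rules; both yield the equivalent identity $\cos\alpha=(1+\cos A)/(1-\cos A)$.
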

	\begin{proof}
		Consider a regular spherical octahedron $\mathcal{O}(a)$. Its faces are equal regular triangles with side length $a$. Denote the face angles by $\alpha$ as shown on Fig.~\ref{soct1}, left. 
		\begin{figure}[htbp]
			\begin{center}
				\includegraphics[scale=0.3]{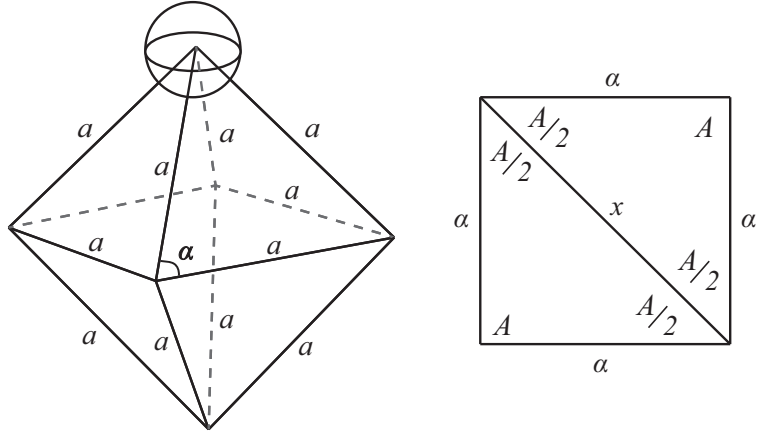}
			\end{center}
			\caption{Regular spherical octahedron $\mathcal{O}$ and its intersection with a sphere centered at a vertex}\label{soct1}
		\end{figure}
		
		By the cosine rule for spherical triangle we have
		$$
		\cos a={\cos}^2 a+{\sin}^2 a\;\cos\alpha\,,
		$$
		or equivalently
		\begin{equation}\label{equa1}
		\cos\alpha=\frac{\cos a}{1+\cos a}\,.
		\end{equation}
		
		Consider the intersection of the octahedron $\mathcal{O}$ by a sufficiently small sphere centred at a vertex of $\mathcal{O}$ (see Fig.~\ref{soct1}, right). The intersection is a regular spherical quadrilateral with equal angles $A$ and side lengths $\alpha$. Denote by $x$ the length of a diagonal of the quadrilateral. The diagonal divides the regular quadrilateral into two equal isosceles triangles. Consider the triangle, which has side lengths $\alpha,\alpha$ and  $x$. By the spherical sine rule we have
		\[
		\frac{\sin x}{\sin A}=\frac{\sin \alpha}{\sin\frac{A}{2}}
		\]
		or equivalently 
		\begin{equation}\label{equa2}
		\sin x = 2\sin\alpha \,\cos\frac{A}{2}.
		\end{equation}
		
		By the spherical cosine rule we get
		\[
		\cos x = \cos^2 \alpha + \sin^2 \alpha\,\cos A
		\]
		or equivalently 
		\begin{equation}\label{equa3}
		\cos x = 1-2\sin^2\alpha\,\sin\frac{A}{2}
		\end{equation}
		
		Excluding $x$ from the system of equations (\ref{equa2}) and (\ref{equa3}), we obtain
		\begin{equation}\label{equa4}
		\cos \alpha = \frac{1+\cos A}{1-\cos A}.
		\end{equation}
		
		Equating the right-hand sides in (\ref{equa1}) and (\ref{equa4}), we derive the relations between dihedral angles and edge lengths of $\mathcal{O}$
		\begin{align}
		\cos A& =- \frac{1}{1+2\cos a}\,,\label{equa5}\\
		a &={\arccos}\left(-\frac{\cos A+1}{2\,\cos A}\right).  \label{equa6}
		\end{align}
		
		Observe, that when $a\to 0$ the spherical octahedron $\mathcal{O}$ degenerates to a single point and its volume $V\to 0$. At the same time, its dihedral angles $\displaystyle A\to \arccos(-1/3)$.
		
		By the Schl\"afli formula (see, e.g., \cite{Vinberg}, Chap.~7, Sec.~2.2), we have the following expression for a differential of volume $V=vol(\mathcal{O})$
		\begin{equation*}\label{Schl2}
		dV=\sum_{\theta}\frac{\ell_{\theta}}{2}\,d\theta=6\,a\,dA,
		\end{equation*}
		where the sum is taken over all the edges of $\mathcal{O}$ and $\ell_{\theta}$ is the length of an edge with dihedral angle $\theta$ along it.
		
		We substitute (\ref{equa6}) in (\ref{Schl2}) and integrate it with respect to variable $A$ denoted by $\varphi$. Thus, by the Newton--Leibniz formula we express the volume of $\mathcal{O}$ in terms of its dihedral angles
		\begin{equation*}\label{vol1}
		V=6\int_{\arccos(-1/3)}^{A}{\arccos}\left(-\frac{\cos\varphi+1}{2\,\cos\varphi}\right)d\varphi\,.
		\end{equation*}
		
		Differentiating equation (\ref{Schl2}) with respect to $a$, we get
		$$
		\frac{dV}{da}=6\,a\,\frac{dA}{da}.
		$$
		
		From relation (\ref{equa5}) we have
		$$
		\frac{d A}{d a}=\frac{\sin a}{(1+2\cos a)\sqrt{\cos a(\cos a+1)}}.
		$$
		Combining the latter two relations, by the Newton--Leibniz formula we express the volume of $\mathcal{O}$ in terms of its edge lengths
		\begin{equation*}\label{vol2}
		V=\int_0^a \frac{6\,t\,\sin t\;dt}{(1+2\cos t)\sqrt{\cos t\,(\cos t+1)}}\,.
		\end{equation*}
	\end{proof}
	
	This result is similar to the formulae for the volume of a regular hyperbolic octahedron (see, e.g., \cite{AKM}, Sec.~4.2).
	
	\section{Existence conditions of a spherical antiprism}
	
	Consider a spherical antiprism $\mathcal{A}_n(a,c)$ given by its spherical edge lengths $a,c$. We denote by $a$ the length of those edges that form top and bottom $n$-gonal faces. We set $c$ for the length of the lateral edges.
	
	\begin{theorem}\label{existsant}
		A spherical antiprism $\mathcal{A}_n(a,c)$ with symmetry group $S_{2n}$ is exist if and only if the following conditions hold
		\begin{align*}
		1+ \cos a - 2\left(1+\cos\frac{\pi}{n}\right)\cos c+2 \cos  \frac{\pi}{n}&\geq 0,\\
		1+ \cos a + 2\left(1-\cos\frac{\pi}{n}\right)\cos c-2 \cos  \frac{\pi}{n}&\geq 0,\\
		\cos a - \cos \frac{2\pi}{n}&\geq0.
		\end{align*}
	\end{theorem}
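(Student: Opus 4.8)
The plan is to realise the antiprism explicitly in the vector model $\mathbb{S}^3=\{x\in\mathbb{R}^4:\langle x,x\rangle=1\}$, where the spherical distance between unit vectors $u,v$ is $\arccos\langle u,v\rangle$, so an edge of length $\ell$ means $\langle u,v\rangle=\cos\ell$. Since $\mathcal{A}_n$ has exactly $2n$ vertices and the cyclic group $S_{2n}$ of order $2n$ acts transitively on them, the vertex set is a single orbit. I would therefore represent the generator $C_{2n\,h}$ by the orthogonal map $T$ that rotates the $(x_1,x_2)$-plane by $\pi/n$ and sends $x_4\mapsto -x_4$ while fixing $x_3$; this is an improper isometry of order $2n$ whose square is the rotation by $2\pi/n$, matching the mirror-rotational symmetry. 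Placing one vertex at $V_0=(p,0,q,r)$ with $p^2+q^2+r^2=1$, the others are $V_j=T^jV_0=\left(p\cos\frac{j\pi}{n},\,p\sin\frac{j\pi}{n},\,q,\,(-1)^j r\right)$, where the even-indexed vertices (fourth coordinate $+r$) form the top $n$-gon and the odd-indexed ones the bottom $n$-gon.

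Next I would impose the edge conditions. Consecutive vertices of the top $n$-gon are $V_0$ and $V_2$, giving $\langle V_0,V_2\rangle=p^2\cos\frac{2\pi}{n}+q^2+r^2=\cos a$, while a lateral edge joins $V_0$ and $V_1$, giving $\langle V_0,V_1\rangle=p^2\cos\frac{\pi}{n}+q^2-r^2=\cos c$. Together with the normalisation $p^2+q^2+r^2=1$ these form a linear system in the squared coordinates, which solves to $p^2=\frac{1-\cos a}{1-\cos\frac{2\pi}{n}}$ together with explicit expressions for $q^2$ and $r^2$ in terms of $a,c,n$.

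The key point is then that the symmetric antiprism exists in $\mathbb{S}^3$ precisely when these squared coordinates are realisable by real numbers, i.e. $p^2\ge 0$, $q^2\ge 0$, $r^2\ge 0$ (which also forces $p^2\le 1$). Using the identity $1-\cos\frac{2\pi}{n}=2\left(1-\cos\frac{\pi}{n}\right)\left(1+\cos\frac{\pi}{n}\right)$ to clear denominators, I expect $r^2\ge 0$ to reduce to the first stated inequality, $q^2\ge 0$ to the second, and $p^2\le 1$ (equivalently $q^2+r^2\ge 0$) to the third, $\cos a-\cos\frac{2\pi}{n}\ge 0$; the factors $1\pm\cos\frac{\pi}{n}$ are positive for $n\ge 2$ and hence do not affect the signs. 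This reduces the whole statement to the routine algebraic simplification of three nonnegativity conditions.

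The main obstacle is the sufficiency direction: once real coordinates are produced one must check that the resulting $2n$ points genuinely bound a convex spherical antiprism with the correct face lattice, rather than a self-intersecting or degenerate figure. Here I would observe that every $V_j$ has the same third coordinate $q$, so all vertices lie on the $2$-sphere $\{x_3=q\}\cap\mathbb{S}^3$ centred at $(0,0,1,0)$ at spherical distance $\arccos q$; when $q>0$ this radius is less than $\pi/2$, placing all vertices in one open hemisphere and hence in convex position, with the triangles $V_jV_{j+1}V_{j+2}$ serving as the $2n$ lateral faces. The boundary cases of equality in the three inequalities then correspond to the degenerate limits of the family (the circumradius of the polygon collapsing, the two $n$-gons merging, or $a$ reaching $2\pi/n$), which is why the conditions are stated with non-strict inequalities.
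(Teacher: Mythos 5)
Your proposal is correct, and it takes a genuinely different route from the paper. The paper works intrinsically: it inscribes the antiprism in a ``cylinder'' over the circumscribed circle of a base, computes the spherical distance $d$ between the centres of the two $n$-gonal faces through a chain of right triangles and the spherical cosine rule, arrives at $\cos d = \bigl(2\cos c\,(1-\cos^2\frac{\pi}{n})-\cos\frac{\pi}{n}(1-\cos a)\bigr)/\bigl(\cos a-\cos\frac{2\pi}{n}\bigr)$, and reads the three inequalities off $-1\le\cos d\le 1$ and $0\le na\le 2\pi$. You instead work extrinsically in $\mathbb{R}^4$, realising the vertex set as the orbit of $V_0=(p,0,q,r)$ under the mirror-rotation $T$, so that the whole problem collapses to a $3\times 3$ linear system in $p^2,q^2,r^2$; I checked that $4r^2(1+\cos\frac{\pi}{n})$, $4q^2(1-\cos\frac{\pi}{n})$ and $1-p^2$ equal (up to the positive factor $1-\cos\frac{2\pi}{n}$ where relevant) exactly the three left-hand sides of the theorem, and that $\cos d=(q^2-r^2)/(q^2+r^2)$ reproduces the paper's formula, so the two computations are consistent. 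Your method is arguably cleaner and buys two things the paper's does not: the three conditions acquire transparent meanings (realisability of the third and fourth coordinates, and of the circumradius), and it makes visible that the third inequality is redundant, being the positive combination $\frac{1}{4(1+\cos\frac{\pi}{n})}(\text{first})+\frac{1}{4(1-\cos\frac{\pi}{n})}(\text{second})=q^2+r^2=1-p^2\ge 0$. The one soft spot is the sufficiency step --- checking that the convex hull of the orbit really carries the face lattice of an antiprism rather than something degenerate --- but your hemisphere argument ($q>0$ puts all vertices within spherical distance less than $\pi/2$ of $(0,0,1,0)$, reducing the combinatorics to the Euclidean case by central projection) addresses this at least as carefully as the paper, whose own proof takes the reduction to ``two bases plus a well-defined distance $d$'' for granted. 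For the necessity direction you should add one sentence normalising the given symmetry: any mirror-rotation of $\mathbb{S}^3$ of the stated geometric type is conjugate in $O(4)$ to your $T$, after which the orbit description and hence the linear system are forced.
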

	\begin{proof}
		Since the antiprism has a symmetry group $S_{2n}$, it is inscribed in a cylinder. This cylinder is formed by parallel translation of the circumscribed circle of the upper $n$-gonal face of the antiprism along the axis of symmetry at a distance $d$ between the centers of the upper and lower $n$-gonal bases. Note that the space $\mathbb{S}^3$ is compact and convex. Thus, for the existence of a spherical antiprism, it is necessary and sufficient that there exist two regular $n$-gonal bases and the distance $d=d(a,c)$ between their centers can be correctly determined. Equivalently, the following conditions must be hold
		\begin{equation}\label{e}
		\begin{split}
		0&\leq n\,a\leq 2\pi,\\
		0&\leq d\leq 2\pi.
		\end{split} 
		\end{equation}
		
		Consider limiting cases when some equalities are attained in (\ref{e}). $n\,a=0$ when antiprism $\mathcal{A}_n(a,c)$ degenerates into a spherical line segment (i.e. a segment of a great circle in $\mathbb{S}^3$). In particular, $d=2\pi$ when this segment is the whole great circle. $d=0$ when $\mathcal{A}_n(a,c)$ degenerates into a spherical $2n$-gon. In particular, $n\,a=2\pi$ when this $2n$-gon lies on a great circle in $\mathbb{S}^3$.
		
		Let us determine the value $\cos d$ as a function of $\cos a$ and $\cos c$. We connect the center $C_1$ and the vertex $v_1$ of the lower $n$-gonal face with a segment of length $R$. Then in the corresponding lateral triangle we connect the vertex $v_1$ and the middle $M$ of the edge $v_2v_3$ of the upper $n$-gonal face by by the segment $H$. We denote the segment connecting the midpoint $M$ and the center $C_2$ of the upper $n$-gonal face by $h$ (see Fig.~\ref{sant_1}, left).
		\begin{figure}[h]
			\begin{center}
				\includegraphics[scale=0.6]{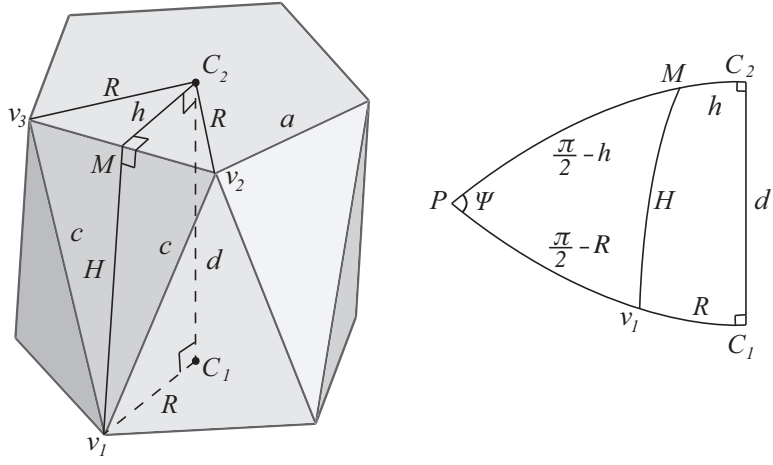}
			\end{center}
			\caption{Quadrilateral $C_1VMC_2$}\label{sant_1}
		\end{figure}
		
		Consider the triangle $v_1Mv_2$, by the spherical Pythagorean theorem we have
		\begin{equation}\label{cosH}
		\cos H =\frac{\cos c}{\cos\frac{a}{2}}.
		\end{equation}
		
		Consider the triangle $v_2C_2v_3$, by the spherical cosine rule we have
		\begin{equation*}
		\cos^2 R =\frac{\cos a-\cos\frac{2\pi}{n}}{1-\cos\frac{2\pi}{n}}.
		\end{equation*}
		
		Since the circumscribed circle with the radius R of the $n$-gonal base of the antiprism cannot be larger than the great circle of the spherical space, the inequalities $a\leq2\pi/n$ and $R\leq\pi/2$ hold. Therefore
		\begin{equation}\label{cosR}
		\cos R =\sqrt{\frac{\cos a-\cos\frac{2\pi}{n}}{1-\cos\frac{2\pi}{n}}}.
		\end{equation}
		
		In the triangle $C_2Mv_2$, by the spherical Pythagorean theorem we have
		\begin{equation}\label{cosh}
		\cos h=\frac{\cos R}{\cos\frac{a}{2}}=\frac{1}{\cos\frac{a}{2}}\sqrt{\frac{\cos a-\cos\frac{2\pi}{n}}{1-\cos\frac{2\pi}{n}}}.
		\end{equation}
		
		Geodesic curves $C_1v_1$ and $C_2M$ meet at pole $P$ (see Fig.~\ref{sant_1}, right). Note that $C_1P=C_2P=\pi/2$. Consider the triangle $C_1PC_2$, by the spherical cosine rule we get $\cos\Psi=\cos d$, where $\Psi$ is the angle at pole $P$. Further, in the triangle $PMv_1$ by the spherical cosine rule we have
		\begin{equation*}
		\cos d=\cos\Psi=\frac{\cos H -\cos (\frac{\pi}{2}-h)\cos(\frac{\pi}{2}-R)}{\sin(\frac{\pi}{2}-h)\sin(\frac{\pi}{2}-R)}=\frac{\cos H-\sin h\,\sin R}{\cos h\,\cos R}.
		\end{equation*}
		
		We substitute (\ref{cosH}), (\ref{cosR}) and (\ref{cosh}) in the latter equation, then we obtain 
		\begin{equation}\label{cosd}
		\cos d = \frac{2\cos c \,(1-\cos^2\frac{\pi}{n})-\cos \frac{\pi}{n}\,(1-\cos a)}{\cos a - \cos \frac{2\pi}{n}}.
		\end{equation}
		
		The first inequality $0\leq n\,a\leq 2\pi$ in (\ref{e}) is equivalent to 
		\begin{equation*}
		\cos a - \cos \frac{2\pi}{n}\geq0.
		\end{equation*}
		
		The second inequality $0\leq d\leq 2\pi$ in (\ref{e}) is equivalent to 
		\begin{equation*}
		-1\leq\cos d\leq 1.
		\end{equation*}
		
		We substitute (\ref{cosd}) in the last double inequality. Then by straightforward calculation we obtain
		\begin{align*}
		1+ \cos a - 2\left(1+\cos\frac{\pi}{n}\right)\cos c+2 \cos  \frac{\pi}{n}&\geq 0,\\
		1+ \cos a + 2\left(1-\cos\frac{\pi}{n}\right)\cos c-2 \cos  \frac{\pi}{n}&\geq 0.
		\end{align*}
	\end{proof}
	
	\section{Isosceles spherical trapezoid}
	
	An {\em isosceles spherical trapezoid} is a spherical quadrilateral that admits a mirror symmetry about the line passing through the midpoints of a pair of its opposite sides (see Fig.~\ref{tra1}).
	\begin{figure}[htbp]
		\begin{center}
			\includegraphics[scale=0.6]{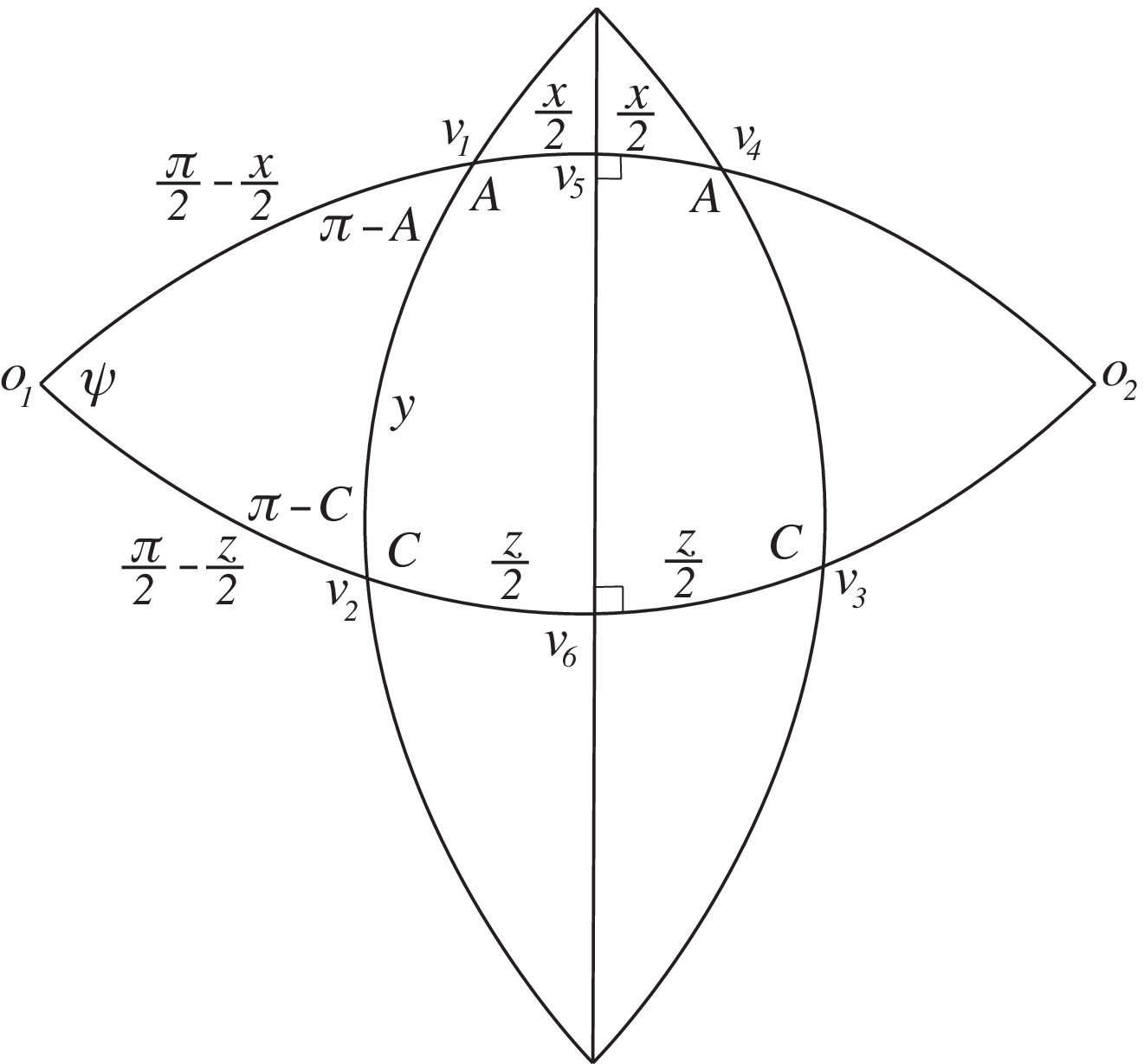}
		\end{center}
		\caption{Isosceles spherical trapezoid $v_1v_2v_3v_4$}\label{tra1}
	\end{figure}
	
	For further consideration we will need the following property of an isosceles spherical trapezoid. This results might be known.
	\begin{theorem}\label{formula_tra}
		Let $\tau$ be an isosceles spherical trapezoid with bases $x, z$ and lateral sides $y$. Let $A$ be the angles at base $x$ and $C$ be the angles at base $z$. Then the following relations hold
		\begin{align*}
		\cos A&=\frac{\cos y\,\sin\frac{x}{2}-\sin\frac{z}{2}}{\sin y\,\cos\frac{x}{2}},\\
		\cos C&=\frac{\cos y\,\sin\frac{z}{2}-\sin\frac{x}{2}}{\sin y\,\cos\frac{z}{2}}.
		\end{align*}
	\end{theorem}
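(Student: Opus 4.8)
The plan is to realize the trapezoid on the unit sphere $S^2\subset\mathbb{R}^3$ and exploit the mirror symmetry directly in Cartesian coordinates, so that both formulas fall out of a single angle computation together with a symmetry swap $x\leftrightarrow z$. The reflection of $S^2$ across a great circle is the restriction of a reflection of $\mathbb{R}^3$ in the plane $\Pi$ spanned by that great circle; choosing $\Pi=\{y=0\}$, the symmetry of $\tau$ becomes $y\mapsto -y$. I would therefore label the vertices so that the reflection interchanges the endpoints of each base, writing $v_1=(p,q,r)$, $v_2=(p,-q,r)$ for the base $x=v_1v_2$ and $v_4=(s,t,u)$, $v_3=(s,-t,u)$ for the base $z=v_3v_4$, all unit vectors. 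This single choice already encodes the isosceles condition: the lateral sides satisfy $v_1\cdot v_4=ps+qt+ru=v_2\cdot v_3$, so they are automatically equal, and nothing further need be imposed beyond the three length equations.

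First I would convert the edge data into the coordinates. From $\cos x=v_1\cdot v_2=1-2q^2$ and $\cos z=v_3\cdot v_4=1-2t^2$ we get $q=\sin\frac{x}{2}$ and $t=\sin\frac{z}{2}$ (positive roots, after fixing orientation), while $\cos y=v_1\cdot v_4=ps+qt+ru$ records the lateral length. I deliberately avoid solving for $p,r,s,u$ or for the axis length $d$: the final formula will only involve the inner products already named, which is what makes the coordinate approach shorter than a synthetic one.

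The key step is the angle at $v_1$. Using the tangent-vector expression for the spherical angle,
\[
\cos A=\frac{\bigl(v_2-\cos x\,v_1\bigr)\cdot\bigl(v_4-\cos y\,v_1\bigr)}{\lvert v_2-\cos x\,v_1\rvert\,\lvert v_4-\cos y\,v_1\rvert},
\]
the denominator equals $\sin x\,\sin y$, and the numerator collapses to $v_2\cdot v_4-\cos x\cos y$. Here everything hinges on the cross-term identity $v_2\cdot v_4=ps-qt+ru=\cos y-2qt=\cos y-2\sin\frac{x}{2}\sin\frac{z}{2}$, which is exactly where the two distinct bases enter. Substituting and using $1-\cos x=2\sin^2\frac{x}{2}$ together with $\sin x=2\sin\frac{x}{2}\cos\frac{x}{2}$, the factor $2\sin\frac{x}{2}$ cancels and one is left with the claimed $\cos A=\dfrac{\cos y\,\sin\frac{x}{2}-\sin\frac{z}{2}}{\sin y\,\cos\frac{x}{2}}$. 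The formula for $\cos C$ then follows without new work: the construction is invariant under the relabeling that exchanges the two bases, sending $x\leftrightarrow z$ and $A\leftrightarrow C$.

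The main obstacle is not conceptual but bookkeeping: getting the sign of the cross term in $v_2\cdot v_4$ right (it is the single place where the reflection acts nontrivially) and driving the half-angle simplification cleanly to the stated form. I would also note the mild care needed to justify the positive square roots and the nondegeneracy $\sin x,\sin y\neq 0$, and remark that an alternative route — applying the spherical cosine rule in the triangle $v_1v_2v_4$ cut off by the diagonal $v_2v_4$ — reduces to the same computation, since evaluating that diagonal again requires precisely the inner product $v_2\cdot v_4$ above.
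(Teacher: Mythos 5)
Your proof is correct, but it takes a genuinely different route from the paper's. The paper argues synthetically: it extends the two bases to great circles, which meet in a pair of antipodal points $o_1,o_2$; since the symmetry axis passes through the midpoints $v_5,v_6$ of the bases, these midpoints lie at distance $\pi/2$ from $o_1$, so the triangle $o_1v_2v_3$ has sides $y$, $\tfrac{\pi}{2}-\tfrac{x}{2}$, $\tfrac{\pi}{2}-\tfrac{z}{2}$ and angles $\pi-A$, $\pi-C$ at $v_2,v_3$; one application of the spherical cosine rule then yields both formulas at once. You instead realize the trapezoid as four unit vectors in $\mathbb{R}^3$ symmetric under $y\mapsto -y$, read off $q=\sin\tfrac{x}{2}$, $t=\sin\tfrac{z}{2}$ from the Gram entries, and compute the vertex angle via tangent vectors, with the identity $v_2\cdot v_4=\cos y-2\sin\tfrac{x}{2}\sin\tfrac{z}{2}$ doing the work that the pole $o_1$ does in the paper. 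Your computation checks out: the numerator $v_2\cdot v_4-\cos x\cos y$ does factor as $2\sin\tfrac{x}{2}\left(\cos y\,\sin\tfrac{x}{2}-\sin\tfrac{z}{2}\right)$, and the $x\leftrightarrow z$ swap legitimately gives $\cos C$. The paper's argument is shorter once one spots the right auxiliary triangle and grants the spherical cosine rule; yours is more self-contained (it essentially re-derives that rule from inner products), avoids any auxiliary construction, and makes the sign conventions and nondegeneracy hypotheses explicit. The one point worth stating rather than just flagging is that $q$ and $t$ must be taken with the \emph{same} sign (both vertices $v_1,v_4$ on the same side of the mirror plane), since choosing opposite signs parametrizes the crossed quadrilateral and flips the sign of the $qt$ term on which the whole computation rests.
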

	\begin{proof}
		Consider an isosceles spherical trapezoid, denote by $o_1, o_2$ two antipodal points of intersection of the sides $x$ and $z$ (see Fig.~\ref{tra1}). The axis of symmetry passes through the midpoints $v_5, v_6$ of the sides $x, z$, respectively. Note that the points $v_5, v_6$ are also the midpoints of two semicircles of geodesic curves connecting two points $o_1$ and $o_2$. Hence, the equalities $o_1v_5=o_1v_6= \pi/2$ are hold.
		
		Consider triangle $o_1v_2v_2$, by the spherical cosine rule we have
		\begin{align*}
		\cos (\pi-A)&=\frac{\cos (\frac{\pi}{2}-\frac{z}{2})- \cos y\,\cos (\frac{\pi}{2}-\frac{x}{2})}{\sin y\,\sin (\frac{\pi}{2}-\frac{x}{2})},\\
		\cos (\pi-C)&=\frac{\cos (\frac{\pi}{2}-\frac{x}{2})- \cos y\,\cos (\frac{\pi}{2}-\frac{z}{2})}{\sin y\,\sin (\frac{\pi}{2}-\frac{z}{2})},
		\end{align*}
		or equivalently
		\begin{align*}
		\cos A&=\frac{\cos y\,\sin\frac{x}{2} -\sin\frac{z}{2}}{\sin y \,\cos\frac{x}{2}},\\
		\cos C&=\frac{\cos y\,\sin\frac{z}{2} -\sin\frac{x}{2}}{\sin y \,\cos\frac{z}{2}}.
		\end{align*}
	\end{proof}
	
	\section{Dihedral angles of a spherical antiprism}
	
	\begin{theorem}\label{sant2}
		Let $\mathcal{A}_n(a,c)$ be a spherical antiprism with $2n$ vertices given by the edge lengths $a,c$. Then the dihedral angles $A$ and $C$ along edges of $\mathcal{A}_n(a,c)$ can be found by the formulae
		\begin{equation}\begin{split}\label{sant3}
		\cos A&=\frac{\sqrt{1-\cos a}\,(2\cos c\,\cos\frac{\pi}{n} - \cos a -1)}{\sqrt{2(1+\cos a - 2 \cos^2 c)(\cos a - \cos \frac{2\pi}{n})}},\\
		\cos C&=\frac{\cos c -\cos a\,\cos c +2(\cos^2 c-1)\cos \frac{\pi}{n}}{1+\cos a - 2 \cos^2 c}.
		\end{split}\end{equation}
	\end{theorem}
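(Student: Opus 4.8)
The plan is to realize both dihedral angles as the base angles of a single isosceles spherical trapezoid and then apply Theorem~\ref{formula_tra}. Intersecting $\mathcal{A}_n(a,c)$ with a sufficiently small sphere centred at one vertex $v$ of, say, the top $n$-gon produces the vertex figure: a spherical quadrilateral whose interior angles equal the dihedral angles along the four edges meeting at $v$, and whose side lengths equal the face angles at $v$ of the four faces surrounding it. Exactly four faces meet at $v$: the regular $n$-gonal top face and three lateral triangles. Two of these triangles share an $a$-edge with the top face, while the remaining one is incident to $v$ only through its two $c$-edges.

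First I would record the relevant face angles. Each lateral triangle is isosceles with base $a$ and two legs $c$. Its apex angle $\gamma$ (the angle opposite the base $a$) and its base angle $\beta$ (at an endpoint of the base) follow from the spherical cosine rule, giving $\cos\gamma=(\cos a-\cos^2 c)/\sin^2 c$ and $\cos\beta=\cos c\,(1-\cos a)/(\sin a\,\sin c)$. The top face contributes its interior angle $\delta$, i.e.\ the interior angle of a regular spherical $n$-gon of side $a$. Considering the triangle formed by the centre and one edge, and using the circumradius $R$ from (\ref{cosR}), I would obtain $\cos(\delta/2)=\cos R\,(1-\cos a)/(\sin R\,\sin a)$, which after substituting (\ref{cosR}) collapses to the clean half-angle relations $\sin^2(\delta/2)=2\cos^2\tfrac{\pi}{n}/(1+\cos a)$ and $\cos^2(\delta/2)=(1+\cos a-2\cos^2\tfrac{\pi}{n})/(1+\cos a)$.

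Next I would identify the vertex figure as an isosceles trapezoid. Reading the faces cyclically around $v$, the two triangles adjacent to the top face contribute equal sides $\beta$ (by the mirror-rotational symmetry $S_{2n}$), so these are the legs; the top face contributes a base of length $\delta$, and the opposite triangle contributes a base of length $\gamma$. The edges at the ends of the base $\delta$ are the $a$-edges, so the angles at that base are the dihedral angles $A$; the edges at the ends of the base $\gamma$ are the $c$-edges, so the angles at that base are the dihedral angles $C$. Hence the vertex figure is exactly an isosceles spherical trapezoid with bases $x=\delta$, $z=\gamma$ and lateral sides $y=\beta$, carrying angles $A$ at $x$ and $C$ at $z$, so Theorem~\ref{formula_tra} applies verbatim.

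Finally I would substitute the half-angle expressions for $\beta,\gamma,\delta$ into the two formulae of Theorem~\ref{formula_tra}. Using $\cos(a/2)=\sqrt{(1+\cos a)/2}$ and $\sin(a/2)=\sqrt{(1-\cos a)/2}$, together with the identity $2\cos^2\tfrac{\pi}{n}-1=\cos\tfrac{2\pi}{n}$ (so that $1+\cos a-2\cos^2\tfrac{\pi}{n}=\cos a-\cos\tfrac{2\pi}{n}$), the radicals combine into $\sqrt{(1+\cos a-2\cos^2 c)(\cos a-\cos\tfrac{2\pi}{n})}$ in the denominator of $\cos A$ and into $1+\cos a-2\cos^2 c$ in the denominator of $\cos C$, yielding (\ref{sant3}) after routine algebra. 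The main obstacle is the geometric bookkeeping of the first three paragraphs: correctly determining the cyclic arrangement of the four faces around $v$, verifying that the two legs are genuinely equal, and matching the two bases to $A$ and $C$ in the orientation demanded by Theorem~\ref{formula_tra}. Once that correspondence is fixed, the computation of $\delta$ via the circumradius is the only nontrivial trigonometric step, and the rest is simplification.
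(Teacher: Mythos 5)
Your proposal is correct and follows essentially the same route as the paper: the authors likewise take the vertex figure at a vertex, identify it as an isosceles spherical trapezoid with bases equal to the interior angle of the $n$-gon and the apex angle of the opposite lateral triangle, legs equal to the base angles of the adjacent lateral triangles, and then apply Theorem~\ref{formula_tra} and simplify. The only cosmetic difference is that the paper computes the half-angle of the regular $n$-gon directly from the isosceles triangle with apex angle $2\pi/n$ rather than via the circumradius, arriving at the same expressions (\ref{cos-sin-x/2}).
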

	\begin{proof}
		Consider a spherical antiprism $\mathcal{A}_n(a,c)$ given by its edge lengths $a,c$. Denote by $A,C$ the dihedral angles along the edges $a,c$, respectively. The upper face of $\mathcal{A}_n(a,c)$ is a regular spherical $n$-gon with edges $a$. Consider an isosceles triangle, the vertices of which are the center and two adjacent vertices of the upper face. Note that the angle of this triangle at the vertex in the center of the upper face is equal to $2 \pi /n$, and the remaining angles are equal to half of the angle $x$ of a regular spherical $n$-gon (see Fig.~\ref{sant1}, left).
		\begin{figure}[h]
			\begin{center}
				\includegraphics[scale=0.6]{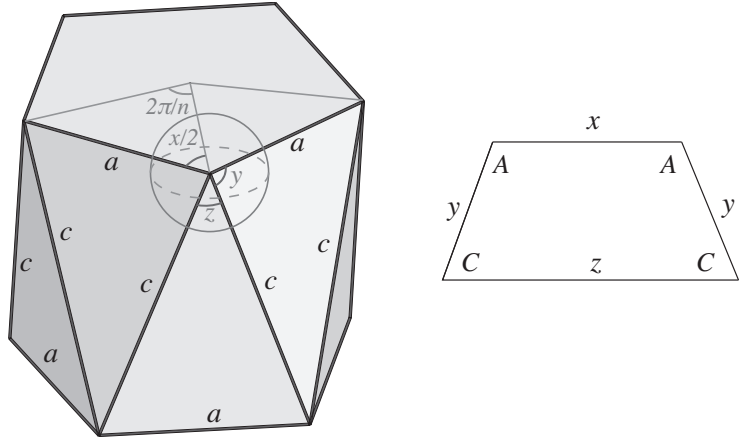}
			\end{center}
			\caption{Spherical antiprism $\mathcal{A}_5$ and its intersection with a sphere centered at a vertex}\label{sant1}
		\end{figure}
		
		By the spherical cosine rule we have
		\begin{equation*}%\label{vol2}
		\cos a = \frac{\cos\frac{2\pi}{n}+\cos^2\frac{x}{2}}{\sin^2\frac{x}{2}}
		\end{equation*}
		or equivalently
		\begin{equation*}%\label{vol2}
		\cos^2\frac{x}{2} = \frac{\cos a - \cos\frac{2\pi}{n}}{\cos a +1}\,.
		\end{equation*}
		
		Since  $x\leq\pi$, we have
		\begin{equation}\begin{split}\label{cos-sin-x/2}
		\cos\frac{x}{2}&=\sqrt{ \frac{\cos a - \cos\frac{2\pi}{n}}{\cos a +1}},\\
		\sin\frac{x}{2}&=\sqrt{\frac{\cos\frac{2\pi}{n}+1}{\cos a+1}}.
		\end{split}\end{equation}
		
		Consider one of the lateral faces of $\mathcal{A}_n(a,c)$, which is an isosceles triangle with the side lengths $a, c, c$. Denote its angles by $z,y,y$, as shown in the figure \ref{sant1}. By the spherical cosine rule we get
		\begin{align}\label{cos-y}
		\cos y&=\frac{\cos c - \cos c\,\cos a}{\sin c\,\sin a}=\cot c\,\tan\frac{a}{2},\\
		\cos z&= \frac{\cos a- \cos^2 c}{\sin^2 c}.\nonumber
		\end{align}
		
		Since $\cos z = 1-2 \sin^2\frac{z}{2} = 2\cos^2\frac{z}{2}-1$ and $y,z,c\leq\pi $, we have
		\begin{equation}\begin{split}\label{sin-cos-z/2-sin-y}
		&\sin\frac{z}{2} = \frac{\sqrt{1-\cos a}}{\sqrt{2}\sin c} ,\\
		&\cos\frac{z}{2}= \frac{\sqrt{\cos a- \cos 2c}}{\sqrt{2}\sin c},\\
		&\sin y = \sqrt{1-\cot^2 c\,\tan^2 \frac{a}{2}}.
		\end{split}\end{equation}
		
		Consider an intersection of the spherical antiprism $\mathcal{A}_n(a,c)$ with a sphere of sufficiently small radius centered at any vertex (see Fig.~\ref{sant1}).
		In the intersection, we have an isosceles spherical trapezoid with angles $A, C$ and sides $x,y,z,y$. Here $A,C$ are equal to the dihedral angles of $\mathcal{A}_n(a,c)$, and $x,y,z$ are the face angles of the triangles considered above (see Fig.~\ref{sant1}). By Theorem~\ref{formula_tra} for the isosceles spherical trapezoid we have
		\begin{align*}
		\cos A&=\frac{\cos y\,\sin\frac{x}{2}-\sin\frac{z}{2}}{\sin y\,\cos\frac{x}{2}},\\
		\cos C&=\frac{\cos y\,\sin\frac{z}{2}-\sin\frac{x}{2}}{\sin y\,\cos\frac{z}{2}}.
		\end{align*}
		
		We substitute (\ref{cos-sin-x/2}), (\ref{cos-y}) and (\ref{sin-cos-z/2-sin-y}) in the last two equations. Hence, we obtain the relations between the dihedral angles and the edge lengths of the antiprism
		\begin{align*}
		\cos A&=\frac{2\sin\frac{a}{2}(\cos c\,\cos\frac{\pi}{n} - \cos^2\frac{a}{2})}{\sqrt{(\cos a - \cos 2c)(\cos a - \cos\frac{2\pi}{n})}},\\
		\cos C&=\frac{2(\cos c\,\sin^2\frac{a}{2}-\sin^2 c\,\cos\frac{\pi}{n})}{\cos a- \cos 2c}
		\end{align*}
		or equivalently
		\begin{align*}
		\cos A&=\frac{\sqrt{1-\cos a}\,(2\cos c\,\cos\frac{\pi}{n} - \cos a -1)}{\sqrt{2(1+\cos a - 2 \cos^2 c)(\cos a- \cos\frac{2\pi}{n})}},\\ 
		\cos C&=\frac{\cos c -\cos a\,\cos c +2(\cos^2 c-1)\cos\frac{\pi}{n}}{1+\cos a - 2 \cos^2 c}.
		\end{align*}
	\end{proof}

For $n=2$ and $a=c$, antiprism $\mathcal{A}_n(a,c)$ is a regular tetrahedron $\mathcal{T}(a)$. 
From Theorem~\ref{sant2} we directly obtain the following.
\begin{corollary}
	Let $\mathcal{T}(a)$ be a regular spherical tetrahedron given by the edge length $a$. Then the dihedral angles $A$ of $\mathcal{T}(a)$ can be found by the formula
	\begin{equation*}
	\cos A=\frac{\cos a}{1+2\cos a}.
\end{equation*}
\end{corollary}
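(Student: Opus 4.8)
The plan is to obtain the corollary by directly specializing the relations of Theorem~\ref{sant2} to the degenerate antiprism $\mathcal{A}_2(a,a)$. First I would substitute $n=2$, which gives $\cos\frac{\pi}{n}=\cos\frac{\pi}{2}=0$ and $\cos\frac{2\pi}{n}=\cos\pi=-1$, together with $c=a$, into the two formulas of Theorem~\ref{sant2}.

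The key point, and the only real subtlety, is to decide which of the two formulas actually produces the dihedral angle of $\mathcal{T}(a)$. When $n=2$ the top and bottom $n$-gonal faces collapse to the two opposite edges of the tetrahedron, so the configuration underlying the derivation of $\cos A$ in Theorem~\ref{sant2} (a genuine regular $n$-gon meeting a lateral triangle) no longer exists: the auxiliary angle $x$ of the $n$-gon tends to $0$ and the isosceles trapezoid in the vertex link degenerates to a triangle. Consequently the $\cos A$ formula does not compute a dihedral angle of $\mathcal{T}(a)$ and should be discarded. The lateral edges $c$, however, remain honest edges of the tetrahedron, and the dihedral angle $C$ along them stays well defined in the limit; since $\mathcal{T}(a)$ is regular all its dihedral angles coincide, so this common value is exactly the angle $A$ in the statement. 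I therefore expect to read off the result from the $\cos C$ formula, and I regard recognizing this as the main obstacle.

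Carrying out the substitution into $\cos C=\dfrac{\cos c-\cos a\,\cos c+2(\cos^2 c-1)\cos\frac{\pi}{n}}{1+\cos a-2\cos^2 c}$ with $n=2$ and $c=a$, the term carrying $\cos\frac{\pi}{n}$ vanishes and the numerator becomes $\cos a-\cos^2 a=\cos a\,(1-\cos a)$. For the denominator I would use the factorization $1+\cos a-2\cos^2 a=(1-\cos a)(1+2\cos a)$. Cancelling the common factor $1-\cos a$, which is legitimate for $a\neq 0$, yields $\cos A=\frac{\cos a}{1+2\cos a}$, as claimed. As a consistency check this coincides with relation~(\ref{1}) obtained independently in Theorem~\ref{regtet}, and as $a\to 0$ it gives $\cos A\to\frac13$, the correct Euclidean value $A=\arccos\frac13$.
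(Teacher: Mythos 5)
Your proposal is correct and follows essentially the same route as the paper, namely direct specialization of Theorem~\ref{sant2} to $n=2$, $c=a$ (the paper gives no further detail than ``directly obtain''). Your observation that one must read the answer off the $\cos C$ relation rather than the $\cos A$ relation is a genuine and correct clarification: the $\cos A$ formula specializes to $-\sqrt{(1+\cos a)/(2(1+2\cos a))}$, which recovers the tetrahedral dihedral angle only after accounting for the collapse of the $2$-gonal face, whereas the $\cos C$ computation you carry out gives the stated identity immediately.
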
	
This equation coincides with formula~(\ref{1}).

For $n=3$ and $a=c$, antiprism $\mathcal{A}_n(a,c)$ is a regular octahedron $\mathcal{O}(a)$. 
From Theorem~\ref{sant2} we have the following.
\begin{corollary}
	Let $\mathcal{O}(a)$ be a regular spherical octahedron given by the edge length $a$. Then the dihedral angles $A$ of $\mathcal{O}(a)$ can be found by the formula
	\begin{equation*}
	\cos A=-\frac{1}{1+2\cos a}.
	\end{equation*}
\end{corollary}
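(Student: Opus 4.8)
The plan is to obtain the formula by specializing Theorem~\ref{sant2} to the parameters $n=3$ and $a=c$, which by the remark above describe precisely a regular spherical octahedron $\mathcal{O}(a)$. Since the statement concerns the dihedral angle $A$, I would substitute directly into the first formula of~(\ref{sant3}), using $\cos\tfrac{\pi}{3}=\tfrac12$, $\cos\tfrac{2\pi}{3}=-\tfrac12$, and $\cos a=\cos c$. It is convenient to abbreviate $t=\cos a$ and to simplify the numerator and the radicand in the denominator separately.

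First I would simplify the numerator: with $2\cos c\,\cos\tfrac{\pi}{n}=t$, the bracket $2\cos c\,\cos\tfrac{\pi}{n}-\cos a-1$ collapses to $-1$, so the numerator equals $-\sqrt{1-t}$. Next I would simplify the radicand $2(1+\cos a-2\cos^2 c)(\cos a-\cos\tfrac{2\pi}{n})$; the key algebraic step is the factorization $1+t-2t^2=(1-t)(1+2t)$, combined with $\cos a-\cos\tfrac{2\pi}{3}=t+\tfrac12$. Multiplying out, the radicand becomes $(1-t)(1+2t)^2$, a perfect square times $(1-t)$, so that cancelling the common factor $\sqrt{1-t}$ against the numerator leaves $-1/(1+2t)=-1/(1+2\cos a)$, as claimed.

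The only point requiring care, and the step I would flag as the main (if modest) obstacle, is the extraction of $\sqrt{(1+2t)^2}=|1+2t|$: one must confirm that $1+2\cos a\ge 0$ so that the absolute value drops without introducing a spurious sign. This is immediate from the circumradius computation in the proof of Theorem~\ref{existsant}, which for $n=3$ gives $\cos^2 R=\tfrac{\cos a-\cos(2\pi/3)}{1-\cos(2\pi/3)}=\tfrac{1+2\cos a}{3}$, whence $1+2\cos a=3\cos^2 R\ge 0$. As an independent check I would also substitute the same values into the $\cos C$ formula of~(\ref{sant3}): its numerator reduces to $t-1$ and its denominator to $1+t-2t^2=-(t-1)(1+2t)$, again yielding $-1/(1+2\cos a)$. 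This confirms $A=C$, consistent with all dihedral angles of a regular octahedron being equal, and completes the derivation.
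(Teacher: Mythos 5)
Your proposal is correct and follows the paper's route exactly: the paper also obtains this corollary by specializing Theorem~\ref{sant2} to $n=3$, $a=c$, merely omitting the algebra that you spell out. Your extra care with the sign of $\sqrt{(1+2\cos a)^2}$ (justified via $\cos a\ge\cos\frac{2\pi}{3}$) and the cross-check against the $\cos C$ formula are sound and welcome details.
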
	
The latter equation coincides with formula~(\ref{equa5}).
	
	\section{Volume formula for a spherical antiprism}
	
	%The main result of the present paper is the following formula.
	\begin{theorem}\label{volumetheo}
		Let $\mathcal{A}_n(a,c)$ be a spherical antiprism with $2n$ vertices given by the edge lengths $a,c$. Then the volume  $V=vol(\mathcal{A}_n(a,c))$ can be found by the formula
		\begin{equation}\label{sant9}
		V=n \int_{c_0}^c \frac{a\,G+c\,H}{(1+\cos a-2\cos^2 c)\sqrt{R}}\,dt,
		\end{equation}
		where
		\begin{flalign*}
		G&=-2\left(\cos t-\cos\frac{\pi}{n}\right)\sin a\,\sin t,\\
		H&=(1-\cos a)\left(1+\cos a+2\cos^2 t-4\cos t\,\cos\frac{\pi}{n}\right),\\
		R&=(1+\cos a)^2-4\left(\cos^2 c+\sin^2 c\,\cos^2\frac{\pi}{n}-(1-\cos a)\cos c\,\cos\frac{\pi}{n}\right).
		\end{flalign*}
		and $c_0$ is the root of the equation $2\cos c\left(1+\cos\frac{\pi}{n}\right)=1+\cos a+2\cos\frac{\pi}{n}$.
	\end{theorem}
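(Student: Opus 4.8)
The plan is to combine the Schl\"afli differential formula with the cosine rules of Theorem~\ref{sant2}. First I would count the edges of $\mathcal{A}_n(a,c)$: there are $2n$ edges of length $a$ (the sides of the top and bottom $n$-gons), each carrying the dihedral angle $A$, and $2n$ lateral edges of length $c$, each carrying the dihedral angle $C$. Hence the Schl\"afli formula (\cite{Vinberg}, Chap.~7, Sec.~2.2) gives
\[
dV=\frac12\sum_\theta \ell_\theta\,d\theta=n\,(a\,dA+c\,dC).
\]
I would then fix $a$ as a parameter and regard $A=A(c)$, $C=C(c)$ as functions of the single variable $c$ via Theorem~\ref{sant2}, so that $\frac{dV}{dc}=n\bigl(a\tfrac{dA}{dc}+c\tfrac{dC}{dc}\bigr)$, and ultimately integrate in $c$ (with running variable $t$) from a degenerate configuration.

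The core of the argument is the computation of $\frac{dA}{dc}$ and $\frac{dC}{dc}$ by implicit differentiation, using $\frac{dA}{dc}=-\frac{1}{\sin A}\frac{d\cos A}{dc}$ and similarly for $C$. Writing $D=1+\cos a-2\cos^2 c$ for the common denominator appearing in Theorem~\ref{sant2} and $N_C=\cos c\,(1-\cos a)-2\sin^2 c\,\cos\frac{\pi}{n}$ for the numerator of $\cos C$, a direct differentiation collapses the numerator of $\frac{d\cos C}{dc}$ to $-\sin c\,H$, and the numerator of $\frac{d\cos A}{dc}$ to a constant multiple of $(\cos c-\cos\frac{\pi}{n})\sin c$, which after clearing the factor $\sqrt D$ reproduces $G$. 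I expect the two derivatives to come out as
\[
\frac{dA}{dc}=\frac{G}{D\sqrt R},\qquad \frac{dC}{dc}=\frac{H}{D\sqrt R},
\]
so that $a\frac{dA}{dc}+c\frac{dC}{dc}=\frac{aG+cH}{D\sqrt R}$, which is exactly the integrand of (\ref{sant9}).

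The main obstacle, on which I would concentrate, is showing that the two radicals coming from $\sin A$ and from $\sin C$ actually coincide and equal the single expression $R$. From $\cos A$ one gets $\sin^2 A$ proportional to $2(\cos a-\cos\frac{2\pi}{n})D-(1-\cos a)(2\cos c\cos\frac{\pi}{n}-\cos a-1)^2$, while from $\cos C$ one gets $\sin^2 C$ proportional to $D^2-N_C^2$. I would therefore verify the polynomial identities
\[
R=\frac{2\,(\cos a-\cos\tfrac{2\pi}{n})\,D-(1-\cos a)(2\cos c\cos\tfrac{\pi}{n}-\cos a-1)^2}{1+\cos a}=\frac{D^2-N_C^2}{\sin^2 c},
\]
which, after substituting $\cos\frac{2\pi}{n}=2\cos^2\frac{\pi}{n}-1$ and factoring out $1+\cos a$, reduce to the stated formula for $R$. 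Both are polynomial identities in $\cos a,\cos c,\cos\frac{\pi}{n}$ and can be confirmed by straightforward expansion; this is where the bulk of the routine but delicate algebra lies.

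Finally, I would determine the lower limit $c_0$. Setting $\cos d=1$ in formula (\ref{cosd}) of Theorem~\ref{existsant} and simplifying by means of $\sin^2\frac{\pi}{n}=(1-\cos\frac{\pi}{n})(1+\cos\frac{\pi}{n})$ shows that the boundary case $d=0$ is attained precisely when $2\cos c\,(1+\cos\frac{\pi}{n})=1+\cos a+2\cos\frac{\pi}{n}$, i.e.\ at $c=c_0$; there the two $n$-gonal bases have coincident centres, the antiprism degenerates to a planar $2n$-gon, and its volume vanishes, $V=0$. Integrating $\frac{dV}{dc}$ from $c_0$ to $c$ by the Newton--Leibniz formula, with $a$ held constant, then yields formula (\ref{sant9}). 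A remaining minor point is to check that the segment of constant $a$ from $c_0$ to $c$ stays inside the existence region of Theorem~\ref{existsant} (so the path is admissible) and that the orientation is correct, namely that $\cos d$ decreases below $1$ as $c$ grows past $c_0$, so the antiprism genuinely inflates into $\mathbb{S}^3$.
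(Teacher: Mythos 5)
Your proposal follows essentially the same route as the paper: the Schl\"afli formula gives $dV=n(a\,dA+c\,dC)$, the derivatives $\partial A/\partial c$ and $\partial C/\partial c$ are computed from the cosine rules of Theorem~\ref{sant2} with the common radical $R$, and the result is integrated at constant $a$ along the vertical segment in the existence region from the degenerate boundary $c=c_0$ (where $d=0$ and $V=0$) to $c$. The only cosmetic difference is that you make explicit the verification that $\sin A$ and $\sin C$ produce the same radical $R$, which the paper subsumes under ``straightforward calculation.''
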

	\begin{proof}
		Consider a spherical antiprism $\mathcal{A}_n(a,c)$ given by the edge lengths $a,c$.  
		By definition, $\mathcal{A}_n(a,c)$ has the symmetry group $S_{2n}$. According to Theorem~\ref{existsant}, the region of existence of such an antiprism has the form
		\begin{multline*}
		\Omega=\{(\cos a,\cos c)\,:\,1+ \cos a - 2\left(1+\cos\frac{\pi}{n}\right)\cos c+2 \cos  \frac{\pi}{n}\geq 0,\\
		1+ \cos a + 2\left(1-\cos\frac{\pi}{n}\right)\cos c-2 \cos  \frac{\pi}{n}\geq 0,\;
		\cos a - \cos \frac{2\pi}{n}\geq0\},
		\end{multline*}
		in the system of coordinates $\cos a,\cos c$ (see Fig.~\ref{sant4}).
		\begin{figure}[h]
			\centerline{\includegraphics[width=4.9in]{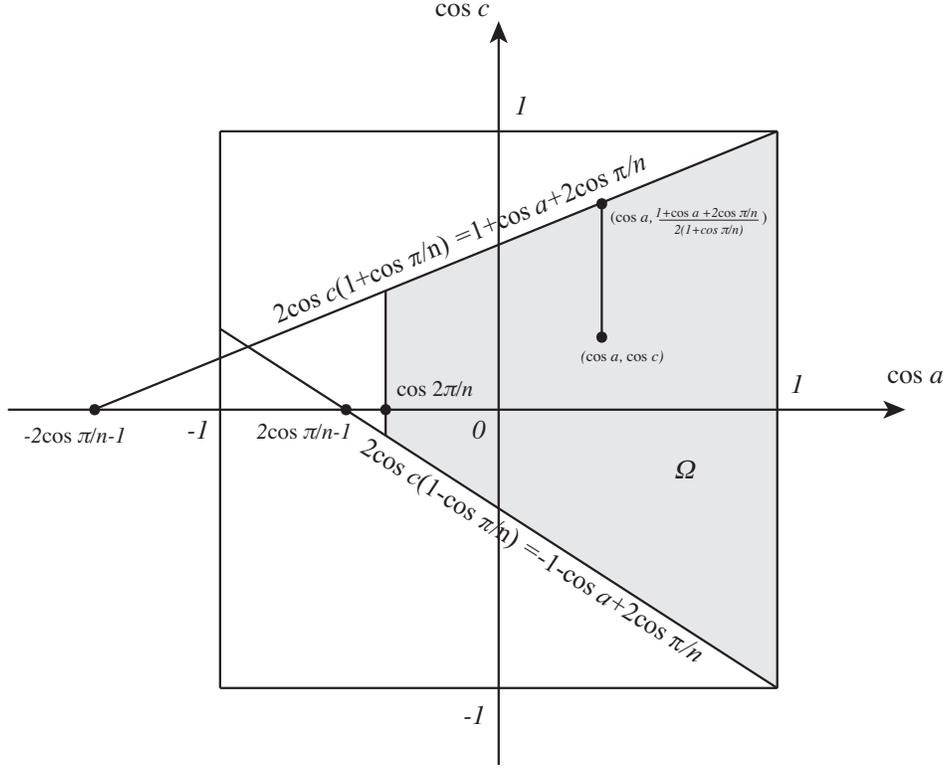}}
			\caption{Region of existence of the spherical antiprism $\mathcal{A}_n(a,c)$.\label{sant4}}
		\end{figure}
		
		The boundary of region $\Omega$ consists of four lines segments  
		\begin{align*}
		\cos a&=1,\\ 
		\cos a&= \cos\frac{2\pi}{n},\\
		2\cos c\,(1+\cos \pi/n)&=1+\cos a +2\cos\frac{\pi}{n},\\
		2\cos c\,(1-\cos \pi/n)&=-1-\cos a +2\cos\frac{\pi}{n}.
		\end{align*}
		
		From the proof of Theorem~\ref{existsant} we know that antiprism $\mathcal{A}_n(a,c)$ loses its dimension on the boundary of $\Omega$, degenerating into a spherical line segment or a spherical $2n$-gon (for more details, see the explanation when equalities attain in (\ref{e})). Therefore, the equality $V=0$ holds on the boundary of $\Omega$.
		
		Denote by $A,C$ the dihedral angles along the edges $a,c$ of antiprism $\mathcal{A}_n(a,c)$, respectively. According to Theorem~\ref{sant2}, the dihedral angles are uniquely determined by the edge lengths. Differentiating the volume as a composite function of the edge lengths, we get
		\begin{equation}\label{sant5}
		\frac{\partial V}{\partial c}=\frac{\partial V}{\partial A}\frac{\partial A}{\partial c}+\frac{\partial V}{\partial C}\frac{\partial C}{\partial c}.
		\end{equation}
		
		By the Schl\"afli formula we have
		\begin{equation*}
		d V=\sum_{\theta}\frac{\ell_{\theta}}{2}\,d\theta=n\,a\,dA\,+n\,c\,dC,
		\end{equation*}
		where the sum is taken over all edges of $\mathcal{A}_n(a,c), \;\ell_{\theta}$ denotes the length of an edge, and $\theta$ is the dihedral angle along it. Hence,
		\begin{equation}\label{sant6}
		\dfrac{\partial V}{\partial A}=n\,a,\quad\dfrac{\partial V}{\partial C}=n\,c.
		\end{equation} 
		
		From (\ref{sant3}) by straightforward calculation we obtain
		\begin{equation}\begin{split}\label{derivativesant}
		\frac{\partial A}{\partial c}&=-\frac{2\left(\cos c-\cos\frac{\pi}{n}\right)\sin a\;\sin c}{(1+\cos a-2\cos^2 c)\sqrt{R}},\\
		\frac{\partial C}{\partial c}&=\frac{(1-\cos a)(1+\cos a+2\cos^2 c-4\cos c\cos\frac{\pi}{n})}{(1+\cos a-2\cos^2 c)\sqrt{R}},
		\end{split}\end{equation} 
		where
		$R=(1+\cos a)^2-4\left(\cos^2 c+\sin^2 c\,\cos^2\frac{\pi}{n}-(1-\cos a)\cos c\,\cos\frac{\pi}{n}\right).$
		
		We substitute the expressions (\ref{sant6}) and (\ref{derivativesant}) in (\ref{sant5}). Then we get
		\begin{equation}\label{sant7}
		\frac{\partial V}{\partial c}=n\frac{a\,G+c\,H}{(1+\cos a-2\cos^2 c)\sqrt{R}},
		\end{equation}
		where
		\begin{align*}
		G&=-2\left(\cos c-\cos\frac{\pi}{n}\right)\sin a\;\sin c,&\\
		H&=(1-\cos a)\left(1+\cos a+2\cos^2 c-4\cos c\cos\frac{\pi}{n}\right).&
		\end{align*}
		
		An integral of the differential form 
		\begin{equation}\label{sant8}
		d V=\dfrac{\partial V}{\partial a}da+\dfrac{\partial V}{\partial c}dc
		\end{equation} 
		does not depend on the integration path connecting two fixed points in $\Omega$. Since the volume vanishes on the boundary of $\Omega$ then by the Newton--Leibniz formula, the volume is equal to the integral of the form (\ref{sant8}) along any piecewise smooth path $\gamma\subset\Omega$ beginning from any point at the boundary of $\Omega$ with end at the point $(\cos a,\cos c)$. We substitute the expression (\ref{sant7}) in (\ref{sant8}). Then we integrate the differential form (\ref{sant8}) over the vertical segment shown in Fig.~\ref{sant4} which connects the boundary of $\Omega$ with the point $(\cos a,\cos c)$. Thus, we arrive at the formula (\ref{sant9}). To distinguish the edge length $c$ from the variable of integration, we denote this variable by $t$.
	\end{proof}

	\bigskip
	

\begin{thebibliography}{8}
		
		\bibitem{AKM} N.V. Abrosimov, E.S. Kudina, A.D. Mednykh,
		{\it On the volume of a hyperbolic octahedron with $\overline{3}$-symmetry}, Proceedings of the Steklov Institute of Mathematics, {\bf 288} (2015), 1--9.
		
		\bibitem{hyper} N. Abrosimov, B. Vuong,
		{\it The volume of a compact hyperbolic antiprism}, Journal of Knot Theory and Its Ramifications, {\bf 27}:13 (2018), 1842010.
		
		\bibitem{AV2017} N.V. Abrosimov and Vuong Huu Bao,
		{\it The volume of a hyperbolic tetrahedron with symmetry group $S_4$}, Trudy Instituta Matematiki i Mekhaniki UrO RAN, {\bf 23}:4 (2017), 7--17.
		
		\bibitem{Vinberg} D.V. Alekseevskii, E.B. Vinberg, A.S. Solodovnikov, {\it Geometry of spaces of constant curvature}, in Geometry II: Spaces of Constant Curvature, Encyclopedia of Mathematical Sciences Vol.~29, Springer, 1993. 
		
		\bibitem{Thu} W.P. Thurston,
		{\it The Geometry and Topology of 3-Manifolds}, Lecture Notes, Princeton
		University, 1980.
		
		\bibitem{Ves} A.Yu. Vesnin, 
		{\it On Volumes of Some Hyperbolic 3-Manifolds}, Lecture Notes Series,
		Vol. 30, Seoul National University, 1996.
		
		\bibitem{VesMed} A.Yu. Vesnin, A.D. Mednykh,
		{\it Hyperbolic volumes of Fibonacci manifolds}, Siberian
		Mathematical Journal, {\bf 36}:2 (1995), 235--245.
		
	\end{thebibliography}
\end{document}